\begin{document}

%%%%%%%%%% Some definitions %%%%%%%%%%

%%%%%%%% Equations, theorems %%%%%%%%%
\renewcommand{\theequation}{\arabic{section}.\arabic{equation}}
\theoremstyle{plain}
\newtheorem{theorem}{\bf Theorem}[section]
\newtheorem{lemma}[theorem]{\bf Lemma}
\newtheorem{corollary}[theorem]{\bf Corollary}
\newtheorem{proposition}[theorem]{\bf Proposition}
\newtheorem{definition}[theorem]{\bf Definition}
\newtheorem{remark}[theorem]{\it Remark}
%\theoremstyle{remark}
%\newtheorem{remark}[theorem]{\bf Remark}

%%%%% Alphabet %%%%%
\def\a{\alpha}  \def\cA{{\mathcal A}}     \def\bA{{\bf A}}  \def\mA{{\mathscr A}}
\def\b{\beta}   \def\cB{{\mathcal B}}     \def\bB{{\bf B}}  \def\mB{{\mathscr B}}
\def\g{\gamma}  \def\cC{{\mathcal C}}     \def\bC{{\bf C}}  \def\mC{{\mathscr C}}
\def\G{\Gamma}  \def\cD{{\mathcal D}}     \def\bD{{\bf D}}  \def\mD{{\mathscr D}}
\def\d{\delta}  \def\cE{{\mathcal E}}     \def\bE{{\bf E}}  \def\mE{{\mathscr E}}
\def\D{\Delta}  \def\cF{{\mathcal F}}     \def\bF{{\bf F}}  \def\mF{{\mathscr F}}
\def\c{\chi}    \def\cG{{\mathcal G}}     \def\bG{{\bf G}}  \def\mG{{\mathscr G}}
\def\z{\zeta}   \def\cH{{\mathcal H}}     \def\bH{{\bf H}}  \def\mH{{\mathscr H}}
\def\e{\eta}    \def\cI{{\mathcal I}}     \def\bI{{\bf I}}  \def\mI{{\mathscr I}}
\def\p{\psi}    \def\cJ{{\mathcal J}}     \def\bJ{{\bf J}}  \def\mJ{{\mathscr J}}
\def\vT{\Theta} \def\cK{{\mathcal K}}     \def\bK{{\bf K}}  \def\mK{{\mathscr K}}
\def\k{\kappa}  \def\cL{{\mathcal L}}     \def\bL{{\bf L}}  \def\mL{{\mathscr L}}
\def\l{\lambda} \def\cM{{\mathcal M}}     \def\bM{{\bf M}}  \def\mM{{\mathscr M}}
\def\L{\Lambda} \def\cN{{\mathcal N}}     \def\bN{{\bf N}}  \def\mN{{\mathscr N}}
\def\m{\mu}     \def\cO{{\mathcal O}}     \def\bO{{\bf O}}  \def\mO{{\mathscr O}}
\def\n{\nu}     \def\cP{{\mathcal P}}     \def\bP{{\bf P}}  \def\mP{{\mathscr P}}
\def\r{\rho}    \def\cQ{{\mathcal Q}}     \def\bQ{{\bf Q}}  \def\mQ{{\mathscr Q}}
\def\s{\sigma}  \def\cR{{\mathcal R}}     \def\bR{{\bf R}}  \def\mR{{\mathscr R}}
                \def\cS{{\mathcal S}}     \def\bS{{\bf S}}  \def\mS{{\mathscr S}}
\def\t{\tau}    \def\cT{{\mathcal T}}     \def\bT{{\bf T}}  \def\mT{{\mathscr T}}
\def\f{\phi}    \def\cU{{\mathcal U}}     \def\bU{{\bf U}}  \def\mU{{\mathscr U}}
\def\F{\Phi}    \def\cV{{\mathcal V}}     \def\bV{{\bf V}}  \def\mV{{\mathscr V}}
\def\P{\Psi}    \def\cW{{\mathcal W}}     \def\bW{{\bf W}}  \def\mW{{\mathscr W}}
\def\o{\omega}  \def\cX{{\mathcal X}}     \def\bX{{\bf X}}  \def\mX{{\mathscr X}}
\def\x{\xi}     \def\cY{{\mathcal Y}}     \def\bY{{\bf Y}}  \def\mY{{\mathscr Y}}
\def\X{\Xi}     \def\cZ{{\mathcal Z}}     \def\bZ{{\bf Z}}  \def\mZ{{\mathscr Z}}
\def\O{\Omega}

\newcommand{\gA}{\mathfrak{A}}
\newcommand{\gB}{\mathfrak{B}}
\newcommand{\gC}{\mathfrak{C}}
\newcommand{\gD}{\mathfrak{D}}
\newcommand{\gE}{\mathfrak{E}}
\newcommand{\gF}{\mathfrak{F}}
\newcommand{\gG}{\mathfrak{G}}
\newcommand{\gH}{\mathfrak{H}}
\newcommand{\gI}{\mathfrak{I}}
\newcommand{\gJ}{\mathfrak{J}}
\newcommand{\gK}{\mathfrak{K}}
\newcommand{\gL}{\mathfrak{L}}
\newcommand{\gM}{\mathfrak{M}}
\newcommand{\gN}{\mathfrak{N}}
\newcommand{\gO}{\mathfrak{O}}
\newcommand{\gP}{\mathfrak{P}}
\newcommand{\gQ}{\mathfrak{Q}}
\newcommand{\gR}{\mathfrak{R}}
\newcommand{\gS}{\mathfrak{S}}
\newcommand{\gT}{\mathfrak{T}}
\newcommand{\gU}{\mathfrak{U}}
\newcommand{\gV}{\mathfrak{V}}
\newcommand{\gW}{\mathfrak{W}}
\newcommand{\gX}{\mathfrak{X}}
\newcommand{\gY}{\mathfrak{Y}}
\newcommand{\gZ}{\mathfrak{Z}}

\def\ve{\varepsilon} \def\vt{\vartheta} \def\vp{\varphi}  \def\vk{\varkappa}

\def\Z{{\mathbb Z}} \def\R{{\mathbb R}} \def\C{{\mathbb C}}  \def\K{{\mathbb K}}
\def\T{{\mathbb T}} \def\N{{\mathbb N}} \def\dD{{\mathbb D}} \def\S{{\mathbb S}}
\def\B{{\mathbb B}}

%%%%% Arrows %%%%%

\def\la{\leftarrow}              \def\ra{\rightarrow}     \def\Ra{\Rightarrow}
\def\ua{\uparrow}                \def\da{\downarrow}
\def\lra{\leftrightarrow}        \def\Lra{\Leftrightarrow}
\newcommand{\abs}[1]{\lvert#1\rvert}
\newcommand{\br}[1]{\left(#1\right)}

\def\lan{\langle} \def\ran{\rangle}

%%%%% Typography %%%%%

\def\lt{\biggl}                  \def\rt{\biggr}
\def\ol{\overline}               \def\wt{\widetilde}
\def\no{\noindent}

%%%%% Math signs %%%%%

\let\ge\geqslant                 \let\le\leqslant
\def\lan{\langle}                \def\ran{\rangle}
\def\/{\over}                    \def\iy{\infty}
\def\sm{\setminus}               \def\es{\emptyset}
\def\ss{\subset}                 \def\ts{\times}
\def\pa{\partial}                \def\os{\oplus}
\def\om{\ominus}                 \def\ev{\equiv}
\def\iint{\int\!\!\!\int}        \def\iintt{\mathop{\int\!\!\int\!\!\dots\!\!\int}\limits}
\def\el2{\ell^{\,2}}             \def\1{1\!\!1}
\def\sh{\sharp}
\def\wh{\widehat}
\def\bs{\backslash}
\def\na{\nabla}
%%%%% Math operations %%%%%

\def\sh{\mathop{\mathrm{sh}}\nolimits}
\def\all{\mathop{\mathrm{all}}\nolimits}
\def\Area{\mathop{\mathrm{Area}}\nolimits}
\def\arg{\mathop{\mathrm{arg}}\nolimits}
\def\const{\mathop{\mathrm{const}}\nolimits}
\def\det{\mathop{\mathrm{det}}\nolimits}
\def\diag{\mathop{\mathrm{diag}}\nolimits}
\def\diam{\mathop{\mathrm{diam}}\nolimits}
\def\dim{\mathop{\mathrm{dim}}\nolimits}
\def\dist{\mathop{\mathrm{dist}}\nolimits}
\def\Im{\mathop{\mathrm{Im}}\nolimits}
\def\Iso{\mathop{\mathrm{Iso}}\nolimits}
\def\Ker{\mathop{\mathrm{Ker}}\nolimits}
\def\Lip{\mathop{\mathrm{Lip}}\nolimits}
\def\rank{\mathop{\mathrm{rank}}\limits}
\def\Ran{\mathop{\mathrm{Ran}}\nolimits}
\def\Re{\mathop{\mathrm{Re}}\nolimits}
\def\Res{\mathop{\mathrm{Res}}\nolimits}
\def\res{\mathop{\mathrm{res}}\limits}
\def\sign{\mathop{\mathrm{sign}}\nolimits}
\def\span{\mathop{\mathrm{span}}\nolimits}
\def\supp{\mathop{\mathrm{supp}}\nolimits}
\def\Tr{\mathop{\mathrm{Tr}}\nolimits}
\def\BBox{\hspace{1mm}\vrule height6pt width5.5pt depth0pt \hspace{6pt}}
\def\where{\mathop{\mathrm{where}}\nolimits}
\def\as{\mathop{\mathrm{as}}\nolimits}

%%%%%%%%%%%%% specialities %%%%%%%%%%%%%%

\newcommand\nh[2]{\widehat{#1}\vphantom{#1}^{(#2)}}
%{{\mathop{#1}\limits^\wedge}\vphantom{#1}^{(#2)}}
\def\dia{\diamond}

\def\Oplus{\bigoplus\nolimits}

%%%%%%%%%%% End of definitions %%%%%%%%%%

\def\qqq{\qquad}
\def\qq{\quad}
\let\ge\geqslant
\let\le\leqslant
\let\geq\geqslant
\let\leq\leqslant
\newcommand{\ca}{\begin{cases}}
\newcommand{\ac}{\end{cases}}
\newcommand{\ma}{\begin{pmatrix}}
\newcommand{\am}{\end{pmatrix}}
\renewcommand{\[}{\begin{equation}}
\renewcommand{\]}{\end{equation}}
\def\eq{\begin{equation}}
\def\qe{\end{equation}}
\def\[{\begin{equation}}
\def\bu{\bullet}

\title[Eigenvalues]{ Eigenvalues  of the  discrete  Schr\"odinger operator in the large  coupling  constant  limit}
\author{Siyu Gao}
\address{Department of Mathematics and Statistics, UNCC, 9201 University City Blvd., 
Charlotte, NC 28223, USA}

\subjclass[2020]{81Q10,      35J10} \keywords{Schr\"odinger operator, density of states, eigenvalue  estimates}

\begin{abstract}
Let $(\lambda_-,\lambda_+)$ be a spectral gap of a periodic Schr\"odinger operator $A$ on the lattice ${\mathbb Z}^d$. Consider the operator $A(\alpha)=A-\alpha V$
where $V$ is a decaying positive potential on ${\mathbb Z}^d$. We study the asymptotic behavior of the number of eigenvalues of $A(t)$ passing through a point $\lambda\in (\lambda_-,\lambda_+)$ as $t$ grows from $0$ to $\alpha$.
\end{abstract}

\maketitle

%\vskip -325cm
\section {Introduction and main results}
\setcounter{equation}{0}

We study operator $A=-\Delta+f$ defined on $\ell^2({\mathbb Z}^d)$ by $$[A u]_n=-\sum_{|m-n|=1}u_m+2d u_n+f(n)u_n,$$ where $f:\mathbb Z^d\to\mathbb R$ is a bounded function on the lattice $\mathbb Z^d$. Let $V:{\mathbb Z}^d\to [0,\infty)$ be a real potential having the property 
\begin{equation}\label{Vasympt}
    V(n)\sim
        \Psi(\theta)|n|^{-p},\qquad \text{ as  } |n|\to \infty,
\end{equation}
where $\Psi$ is a continuous  function on the  unit sphere $\{x\in {\mathbb R}^d:\,\,|x|=1\}$ and $\theta=n/|n|$. Then the operator of multiplication by the function $V$ is compact. 

Define the operator $A(\alpha)$ to be
\[
A(\alpha)=A-\alpha V,\qquad \alpha>0.
\]
Assume that the spectrum of $A$ has a bounded gap $\Lambda=(\lambda_-,\lambda_+)$, that is, $\Lambda\cap\sigma(A)=\emptyset$ and $\lambda_\pm\in\sigma(A)$. By Weyl's theorem, the spectrum of operator $A(\alpha)$ is discrete in $\Lambda$, so it consists of isolated eigenvalues of finite multiplicity. These eigenvalues move from the right to the left with the growth of $\alpha$ and depend on $\alpha$ continuously. For a point $\lambda\in\Lambda$, we define $N(\lambda,\alpha)$ to be the number of eigenvalues $N(\lambda,\alpha)$ of $A(t)$ that pass through $\lambda$ as $t$ increases from $0$ to $\alpha$. 

Also, for any self-adjoint operator $T=T^*$, we set $N(T,\lambda)=\text{rank}(E_T(-\infty,\lambda))$, where $E_T$ is the spectral measure of $T$.

In order to state our main theorem,  we  need to introduce the so-called density of states for the unperturbed operator $A$.  For that purpose,  for  any  domain $\Omega\subset {\mathbb R}^d$, we consider the operator $A_{\Omega}$  defined  on  $\ell^2(\Omega\cap\mathbb Z^d)$ by
\[
\bigl[ A_{\Omega}u\bigr]_n=-\sum_{|m-n|=1,\,  m\in \Omega\cap\mathbb Z^d}u_m+ 2d u_n+f(n)u_n,\qquad \forall n \in \Omega\cap\mathbb Z^d.
\]
Denote  by  $N(A_\Omega,\lambda)$ the number of eigenvalues of operator $A_\Omega$ that are strictly smaller than $\lambda\in\mathbb R$.  For $\beta>0$, we  define    ${\beta\Omega}$ to be the  domain obtained  from $\Omega$ by  scaling:
\[
\beta\Omega=\{x\in {\mathbb R}^d:\,\, x=\beta y\quad \text{for some }\,\, y\in \Omega\}.
\]
Then we consider the number $N(A_{\beta\Omega},\lambda)$ and study its behavior for large values of $\beta$.

{\bf Condition} Assume that the following limit exists for each $\lambda\in\mathbb R$,
\begin{eqnarray*}
    \rho(\lambda)=\lim_{\beta\to\infty}\frac{N(A_{\beta\Omega},\lambda)}{\beta^d\text{vol}\Omega}.
\end{eqnarray*} This limit is called the density of states.

 It is  known that, $\rho$ is an increasing function of $\lambda$ having the property
\begin{eqnarray*}
\rho(\lambda)=\begin{cases}0,\quad \text{for}\quad \lambda<-\|f\|_\infty;\\
1,\quad \text{for}\quad \lambda>4d+\|f\|_\infty.
\end{cases}
\end{eqnarray*} 
Our main result  establishes  the asymptotics  of $N(\lambda,\alpha)$ as $\alpha\to\infty$.

\begin{theorem}
\label{main theorem}
Let the potential $V$  obey the condition \eqref{Vasympt}. Assume that $\lambda\in\Lambda$. Let $N(\lambda,\alpha)$ be the  number of eigenvalues of $A(t)$ passing through $\lambda$ as $t$ increases from $0$ to $\alpha$.  Then
\begin{eqnarray}
N(\lambda,\alpha)\sim \alpha^{d/p} \int_{{\mathbb R}^d}\Bigl(\rho(\lambda+\Psi(\theta)|x|^{-p})-\rho(\lambda)\Bigr)dx,\qquad  \text{as}\quad \alpha\to\infty.
\end{eqnarray}
\end{theorem}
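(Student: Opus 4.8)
The plan is to reduce the counting of spectral crossings to an eigenvalue-counting problem for a compact operator via the Birman--Schwinger principle, and then to evaluate the resulting counting function by a Dirichlet--Neumann bracketing argument that introduces the density of states $\rho$. First I would record the Birman--Schwinger reduction. Since $\lambda\in\Lambda$ lies in a gap, $A-\lambda$ is boundedly invertible, and $\lambda$ is an eigenvalue of $A(t)=A-tV$ exactly when $1/t$ is an eigenvalue of the compact operator $K(\lambda)=V^{1/2}(A-\lambda)^{-1}V^{1/2}$. Because $V\ge0$, the eigenvalues of $A(t)$ are monotone decreasing in $t$ and cross $\lambda$ only downward; matching $t\in(0,\alpha]$ with $1/t\in[1/\alpha,\infty)$ gives the exact identity $N(\lambda,\alpha)=n_+(1/\alpha,K(\lambda))$, the number of eigenvalues of $K(\lambda)$ that are $\ge 1/\alpha$. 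Equivalently, and more convenient for bracketing, this count equals the regularized difference $N(A(\alpha),\lambda)-N(A,\lambda)$ of eigenvalue counting functions, understood as the spectral flow through $\lambda$.

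The heart of the argument is a Weyl-type asymptotic for this count via bracketing. Partitioning $\Z^d$ into cubes $Q_k$ of side $L$, I would use that cutting the inter-cube hopping edges of the discrete Laplacian produces the two-sided operator bound $\bigoplus_k A_{Q_k}^N\le A\le\bigoplus_k A_{Q_k}^D$, where the Dirichlet block $A_{Q_k}^D$ is the operator $A_{Q_k}$ of the Condition and the Neumann block differs from it by a nonnegative boundary term $B_k$ with $\operatorname{rank}B_k=O(L^{d-1})$; the upper bound uses $\tfrac12|u_m-u_n|^2\le|u_m|^2+|u_n|^2$ on the cut edges. Subtracting $\alpha V$ and invoking min-max, the perturbed count is sandwiched between the corresponding block counts, and on each block the contribution is $N(A_{Q_k}^{D/N},\lambda+\alpha V|_{Q_k})-N(A_{Q_k}^{D/N},\lambda)$. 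Since $\rho$ is flat throughout the gap, only cubes with $\alpha V_k\gtrsim\lambda_+-\lambda$, i.e. those inside a ball of radius $\sim\alpha^{1/p}$, contribute. Replacing $V$ by its value at the cube centre and using the Condition $N(A_{Q_k}^{D/N},\mu)\approx L^d\rho(\mu)$, the sum becomes a Riemann sum for $\int_{\R^d}\bigl(\rho(\lambda+\alpha V(y))-\rho(\lambda)\bigr)\,dy$; the scaling $y=\alpha^{1/p}x$, under which $\alpha V(\alpha^{1/p}x)\to\Psi(\theta)|x|^{-p}$ by \eqref{Vasympt}, produces the factor $\alpha^{d/p}$ and the claimed integral.

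To run this rigorously without a quantitative rate for the density of states, I would use a nested limit. Fix $\varepsilon>0$, and using that the normalized counts $N(A_{\beta\Omega},\mu)/(\beta^d\operatorname{vol}\Omega)$ are monotone in $\mu$ and converge pointwise to the continuous $\rho$, conclude by a P\'olya--Helly argument that the convergence is uniform in $\mu$ on the relevant compact range $[\lambda,\,4d+\|f\|_\infty]$, beyond which $\rho\equiv1$ and the bound $N(A_{Q_k},\mu)\le\dim$ closes the estimate. Choosing $L=L(\varepsilon)$ so large that the per-cube density-of-states error is $\le\varepsilon$, then sending $\alpha\to\infty$ with $L$ fixed, the oscillation of $\alpha V$ over a fixed cube at scale $|n|\sim\alpha^{1/p}$ is $O(\alpha^{-1/p}L)\to0$, so the Riemann sum converges; meanwhile the Dirichlet--Neumann discrepancy over the $O(\alpha^{d/p}/L^d)$ relevant cubes is $O(\alpha^{d/p}/L)=o(\alpha^{d/p})$. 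This traps $\limsup$ and $\liminf$ of $\alpha^{-d/p}N(\lambda,\alpha)$ within $C\varepsilon$ of the target integral, and letting $\varepsilon\to0$ (so $L(\varepsilon)\to\infty$) finishes.

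The main obstacle is precisely this coupling of two scales: one must simultaneously make the cubes large enough for the density of states to be accurate and small enough relative to $\alpha^{1/p}$ for $V$ to be essentially constant, while controlling boundary and surface states. A subsidiary point requiring care is the far field outside the ball of radius $\sim\alpha^{1/p}$, where $\alpha V$ is subcritical so that $\lambda+\alpha V$ remains in the gap; there one must show that no crossings occur to leading order, which I would handle by an additional Dirichlet--Neumann cut at the sphere of radius $\sim\alpha^{1/p}$ together with the decay of $V$, so that the outer region contributes $o(\alpha^{d/p})$.
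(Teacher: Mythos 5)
Your proposal is correct in outline and follows the same skeleton as the paper: Birman--Schwinger reduction $N(\lambda,\alpha)=n_+(\alpha^{-1},\sqrt V(A-\lambda I)^{-1}\sqrt V)$, spatial decoupling, density-of-states Riemann sums over cubes on which $\alpha V$ is nearly constant, and separate control of the inner and outer regions. Two of your technical choices, however, genuinely diverge from the paper's, and the comparison is instructive. First, you tile with \emph{fixed} cubes of side $L(\varepsilon)$ and use Dirichlet--Neumann form bracketing, whereas the paper uses cubes $\alpha^{1/p}Q_i$ that \emph{scale with the coupling} and controls all decoupling errors purely by rank: $\mathrm{rank}(A-B_0)=O(\alpha^{(d-1)/p})$, and counting functions shift by at most the rank, so no operator inequality for the hopping part is ever needed --- only min-max monotonicity in $V$. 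The scaled cubes are not cosmetic: the paper's Condition defines $\rho$ only as a limit over dilated domains $\beta\Omega$, so it applies verbatim to $A_{\alpha^{1/p}Q_i}$, while your fixed cubes sit at positions tending to infinity, and the uniform-in-position approximation $N(A_{Q_k},\mu)\approx L^d\rho(\mu)$ needs an extra homogeneity input (periodicity of $f$ with $L$ commensurate with the period); likewise your P\'olya--Helly/Dini step uses continuity of $\rho$ in $\mu$, which is not assumed --- the paper avoids both issues by evaluating the monotone $\rho$ at $\min_{Q_i}V$ and $\max_{Q_i}V$ and squeezing upper and lower Riemann sums. Second, in the far field the paper does something your sketch would need to replicate: the decoupled operator $B_0$ can have $O(\alpha^{(d-1)/p})$ cut-induced eigenvalues inside the gap, so the paper passes to $B=B_0-(\lambda_+-\lambda_-)E_{B_0}(\lambda_-,\lambda_+)$, whose spectrum in $(\lambda_-,\lambda_+)$ is empty; only then does the resolvent bound $\|(B_3-\lambda I)^{-1}\|\le |\lambda-\lambda_+|^{-1}+|\lambda-\lambda_-|^{-1}$, combined with $\alpha V\le\|\Psi\|_\infty(1+\delta)\varepsilon_2^{-p}$ outside radius $\varepsilon_2\alpha^{1/p}$ and the stated choice of $\varepsilon_2$, give $\|\tilde X_3(\lambda)\|<\alpha^{-1}$, i.e. \emph{exactly zero} outer crossings rather than merely $o(\alpha^{d/p})$. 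As you state it, the edge states along your spherical cut could still cross $\lambda$; either remove them by the paper's spectral-shift trick, or observe that they number $O(\alpha^{(d-1)/p})=o(\alpha^{d/p})$ while no band state can cross since $\alpha\sup_{\Omega_3}V<\lambda_+-\lambda$. With these repairs your argument closes; indeed your handling of the deep-well cubes (where $\rho\equiv1$ and the count saturates at the dimension) is slightly sharper than the paper's crude inner-ball bound $N_1(\lambda,\alpha)\le 2\omega_d(\varepsilon_1\alpha^{1/p}+\sqrt d)^d$, which it simply discards by letting $\varepsilon_1\to0$ at the end.
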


Operators with periodic potentials $f$ play a very important role in the so-called one-electron approximation model used in the quantum theory of crystals. According to this theory, the ``allowed" energies of an electron moving in a crystal lie in $\sigma(A)$, the spectrum of $A$, consisting of bands.
For a typical insulator, there is at least one gap in $\sigma(A)$. The $Al_2O_3$-crystal is one of examples of periodic media with this property. The theory also says that a photon whose energy is smaller than the length of the gap can not be absorbed by the crystal, because all states in the first band of the spectrum are already filled (see also \cite{DH}). That is the reason why Corundum (i.e. $Al_2O_3$) is colorless. However, replacing some of the $Al^{3+}$- ions by either $Cr^{3+}$ or $Ti^{3+}$-ions, one
obtains Ruby or Sapphire which absorb green and yellow colors. That is the reason why Ruby or Sapphire look red and blue respectively. Due to the fact that the crystal is not pure, there are additional isolated energy levels in spectral gaps of the pure crystal. Since the distances from these levels to the edges of the gap are smaller than the length of the gap,
it is easier for the light to be absorbed by a crystal with impurities. In our model, the function $V$ plays the role of the impurity potential.

While such problems on the lattice are considered for the first time, similar problems involving continuous operators on ${\mathbb R}^d$ have been studied before. For instance the main result of \cite{Hempel2}
says that, if $V$ decays sufficiently fast at infinity, then the number $N(\lambda,\alpha)$ of eigenvalues of the continuous Schrodinger operator $H(t)=-\Delta+f-tV$ passing through a regular point $\l\notin \sigma(H)$ as $t$ increases from $0$ to $\alpha$ satisfies
\begin{eqnarray}
\label{classical}
N(\lambda,\alpha)\sim (2\pi)^{-d} \omega_d \alpha^{d/2}\int_{{\mathbb R}^d} V^{d/2}dx,\qquad \text{as}\quad \alpha\to\infty.
\end{eqnarray}
Here $\omega_d$ is the volume of the unit ball in ${\mathbb R}^d$. An interesting short proof of \eqref{classical} was given by M. Birman in \cite{Birman}.
The author showed in \cite{Birman} that the asymptotics of $N(\lambda,\alpha)$ does not depend on the point $\lambda$ and the potential $f$.
Consequently, one can take $\lambda=-1$ and set $f=0$. After that, one can use previously known results.

The articles
\cite{DH} and \cite{Klaus2} are probably the two earliest publications in which the authors discussed the eigenvalues of $H(\alpha)$ in a bounded spectral gap of $H$.
Perturbations in \cite{DH} were not necessarily sign-definite and the question was whether the quantity $ N(\lambda,\alpha)$ is positive for some $\alpha>0.$ Similar questions
were studied in \cite{GS}. While the papers that we mentioned answered some important questions, they did not contain asymptotic formulas for the quantity
$N(\l,\a)$, which appeared later in \cite{ADH} and \cite{Hempel2}.
Among the other results of \cite{ADH}, this work of S. Alama, P. Deift, and R. Hempel contains an asymptotic formula for $N(\l,\a)$ in the case $V(x)\sim -c|x|^{-p}$ as $|x|\to\infty$, with $p, c>0$. In the corresponding statement of \cite{ADH}, the potential $V\leq0$ is nonpositive, and the eigenvalues of $H(\a)$ move from the left to the right.
The methods that are close to the ones of the paper \cite{ADH} were used by
R. Hempel in \cite{Hempel2} and \cite{Hempel}. In particular, he proved \eqref{classical} for $\lambda$ that belongs to a finite gap (see \cite{Hempel2}).

An interesting phenomenon remotely related to our study was discovered by the authors of \cite{Geszt} (by F. Gesztesy et al.).
It turns out that, if $V\geq 0$ is compactly supported and $d=1$, then the eigenvalues of $H(\alpha)$
move slowly near some very specific points of the gap and then move faster once they pass these points. This type of behavior of eigenvalues is called the ``cascading" in \cite{Geszt}.

\section{Birman-Schwinger Principle}

Here we describe the way to reduce the study of eigenvalues of operator $A(\alpha)$ in $\Lambda$ to the study of the spectrum of the compact operator
\begin{eqnarray}
\label{X}
    X(\lambda)=\sqrt{V}(A-\lambda I)^{-1}\sqrt{V}.
\end{eqnarray} Namely, for a self-adjoint operator $T=T^*$, we define $n_+(s,T)$ to be the number of eigenvalues of $T$ that are greater than $s>0$.

\begin{lemma}
    Assume that $\lambda\in\Lambda$. Let $X(\lambda)$ be defined in \eqref{X} and $N(\lambda,\alpha)$ be the  number of eigenvalues of $A(t)$ passing through $\lambda$ as $t$ increases from $0$ to $\alpha$. Then
    \begin{eqnarray}
    \label{Birman-Schwinger Principle}    N(\lambda,\alpha)=n_+(s,X(\lambda)), \qquad s=\alpha^{-1}.    \end{eqnarray}
\end{lemma}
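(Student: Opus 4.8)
The plan is to reduce the eigenvalue equation $A(t)u=\lambda u$ to the Birman--Schwinger equation for $X(\lambda)$, using that $\lambda$ lies in the resolvent set of $A$, and then to count the downward crossings of the level $\lambda$ by translating the parameter $t$ into the reciprocal eigenvalue $1/t$ of $X(\lambda)$. Since $\lambda\in\Lambda$, the operator $A-\lambda I$ is boundedly invertible; writing $V=\sqrt V\,\sqrt V$ and setting $\psi=\sqrt V\,u$, the equation $(A-\lambda)u=tVu$ becomes $\psi=t\,X(\lambda)\psi$, i.e. $1/t$ is an eigenvalue of $X(\lambda)$. Because $V$ is compact (multiplication by a decaying function), $\sqrt V$ and hence $X(\lambda)=\sqrt V(A-\lambda)^{-1}\sqrt V$ are compact and self-adjoint, so the positive spectrum of $X(\lambda)$ consists of finitely many eigenvalues above any $s>0$; this already guarantees $n_+(s,X(\lambda))<\infty$.

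First I would make the correspondence precise at the level of eigenspaces. For fixed $t>0$ I would introduce the two maps $u\mapsto\sqrt V\,u$ from $\ker(A(t)-\lambda)$ to $\ker(X(\lambda)-\tfrac1t I)$ and $\psi\mapsto t(A-\lambda)^{-1}\sqrt V\,\psi$ in the opposite direction, and check by direct substitution (using $(A-\lambda)u=tVu$ in one direction and $Vu=\sqrt V\psi$ in the other) that they are well defined and mutually inverse. In particular an eigenvector $u$ at $\lambda$ has $\sqrt V u=\psi\neq0$, since otherwise $u=t(A-\lambda)^{-1}\sqrt V\psi=0$. This yields $\dim\ker(A(t)-\lambda)=\dim\ker(X(\lambda)-\tfrac1t I)$, so $\lambda$ is an eigenvalue of $A(t)$ of multiplicity $k$ exactly when $1/t$ is an eigenvalue of $X(\lambda)$ of multiplicity $k$. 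Note that $t>0$ forces $1/t>0$, so only the positive eigenvalues of $X(\lambda)$ matter; the operator $(A-\lambda)^{-1}$ is not sign-definite because $\lambda$ lies in a gap, so $X(\lambda)$ also has negative eigenvalues, but these correspond to $t<0$ and are irrelevant here.

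Next I would carry out the counting. As $t$ grows, the eigenvalues of $A(t)$ decrease monotonically, and the decrease is strict whenever a branch meets $\lambda$: at such a point the eigenvector $u$ satisfies $\sqrt V u\neq0$, so by first-order perturbation theory $\frac{d}{dt}\mu(t)=-\langle u,Vu\rangle/\|u\|^2=-\|\sqrt V u\|^2/\|u\|^2<0$. Hence every eigenvalue branch passes through $\lambda$ at most once, at a unique parameter value $\tau$ with $\lambda\in\sigma(A(\tau))$, and the total number of crossings on $(0,\alpha)$ is the number of such $\tau<\alpha$, counted with the multiplicity of $\lambda$ in $\sigma(A(\tau))$. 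By the eigenspace correspondence this equals the number of eigenvalues $1/\tau$ of $X(\lambda)$ with $1/\tau>1/\alpha=s$, namely $n_+(s,X(\lambda))$. Equivalently, both $t\mapsto N(\lambda,t)$ and $t\mapsto n_+(1/t,X(\lambda))$ are non-decreasing integer-valued functions that vanish for small $t$ and jump by equal amounts at the same points, hence coincide at $t=\alpha$.

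The main obstacle I anticipate is the bookkeeping of multiplicities along the crossing, i.e. making rigorous that the cumulative number of eigenvalues that have descended through $\lambda$ equals the number of eigenvalues of $X(\lambda)$ exceeding $s$. The strict monotonicity above rules out an eigenvalue lingering at $\lambda$, but to be fully rigorous I would either invoke analyticity of the eigenvalue branches together with the Feynman--Hellmann derivative computed above, or bypass the branch picture by the variational identity $n_+(s,X(\lambda))=\max\{\dim L:\langle X(\lambda)f,f\rangle>s\|f\|^2\ \forall f\in L\setminus\{0\}\}$ and match it against the min--max count of eigenvalues of $A(\alpha)$ lying below $\lambda$ inside the gap. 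A minor point to settle is the boundary convention at $t=\alpha$: interpreting ``passing through as $t$ increases from $0$ to $\alpha$'' as a strict crossing $\tau<\alpha$ matches the strict inequality in $n_+(s,\cdot)$, so an eigenvalue sitting exactly at $\lambda$ when $t=\alpha$ is not counted.
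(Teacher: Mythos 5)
Your argument is correct, and it is essentially the standard Birman--Schwinger argument: the paper itself gives no proof of this lemma at all, but simply cites Birman's paper \cite{Birman}, where the proof runs along the same lines you propose. Your two maps $u\mapsto\sqrt{V}u$ and $\psi\mapsto t(A-\lambda I)^{-1}\sqrt{V}\psi$ are indeed mutually inverse between $\ker(A(t)-\lambda)$ and $\ker(X(\lambda)-t^{-1}I)$ (one checks $TSu=t(A-\lambda I)^{-1}Vu=u$ using $(A-\lambda)u=tVu$, and $ST\psi=tX(\lambda)\psi=\psi$), which gives the multiplicity-preserving correspondence; and the Feynman--Hellmann computation $\mu'(t)=-\|\sqrt{V}u\|^2/\|u\|^2<0$ at a crossing correctly rules out branches touching $\lambda$ without crossing, since $\sqrt{V}u\neq 0$ there by your injectivity observation. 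The only place where you are (self-admittedly) informal is the global branch picture: to make ``number of eigenvalues passing through $\lambda$'' precise one either invokes Rellich--Kato analyticity of the eigenvalue branches of the type-(A) family $A-tV$ inside the gap, or simply takes $N(\lambda,\alpha)=\sum_{0<t<\alpha}\dim\ker(A(t)-\lambda)$ as the definition, in which case your kernel isomorphism alone yields the identity $N(\lambda,\alpha)=n_+(\alpha^{-1},X(\lambda))$ immediately, with finiteness coming from compactness of $X(\lambda)$ as you note. Your remark on the boundary convention (a crossing exactly at $t=\alpha$ is not counted, matching the strict inequality in $n_+$) is a genuine point of care that the paper glosses over entirely.
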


The proof of the lemma can be found in \cite{Birman}. This lemma is called the Birman-Schwinger Principle.

\section{Splitting principle}

In this  section, we  justify the so-called  splitting principle described below.
Let $0<\varepsilon_1<\varepsilon_2<\infty$.
 We decompose  ${\mathbb R}^d$  into  three regions:
\begin{eqnarray*}
\Omega_1(\alpha)&=&\{x\in {\mathbb R}^d: \,\, |x|<\varepsilon_1\alpha^{1/p}\},
\\
\Omega_2(\alpha)&=&\{x\in\mathbb R^d:\varepsilon_1\alpha^{1/p}\le|x|\le\varepsilon_2\alpha^{1/p}\},
\\
\Omega_3(\alpha)&=&\{x\in {\mathbb R}^d: \,\, |x|>\varepsilon_2\alpha^{1/p}\}.
\end{eqnarray*} 
We choose $\varepsilon_2$ so that $\varepsilon_2>(\|\Psi\|_\infty/|\lambda_+-\lambda|+\|\Psi\|_\infty/|\lambda_--\lambda|)^{1/p}$. For each region $\Omega_k(\alpha)$, we consider the operator  $A_{\Omega_k(\alpha)}$, where $k=1,2,3$. For $\lambda\in\mathbb R$, we define
\begin{eqnarray}
\label{N_k}    N_k(\lambda,\alpha)=\text{rank E}_{A_{\Omega_k}(\alpha)-\alpha V}(-\infty,\lambda)-\text{rank E}_{A_{\Omega_k}(\alpha)}(-\infty,\lambda), \qquad k=1,2.
\end{eqnarray}

\begin{proposition}
    Let $\varepsilon_2>(\|\Psi\|_\infty/|\lambda_+-\lambda|+\|\Psi\|_\infty/|\lambda_--\lambda|)^{1/p}$ and  $N_k(\lambda,\alpha)$ be defined above for $k=1,2$. Then \[
N(\lambda,\alpha)=N_1(\lambda,\alpha)+N_2(\lambda,\alpha)+O(\alpha^{(d-1)/p}),\qquad\text{as}\quad \alpha\to\infty.
\]
\end{proposition}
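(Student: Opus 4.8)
The plan is to work throughout with the Birman--Schwinger operators of Section~2, so that every counting function is the finite quantity $n_+(s,\cdot)$ of a compact operator and no infinite rank ever enters the bookkeeping. Writing $X_{\Omega}(\lambda)=\sqrt V(A_\Omega-\lambda)^{-1}\sqrt V$ for a region $\Omega$ (with $V$ read as $V\1_\Omega$), the principle \eqref{Birman-Schwinger Principle} applied to $A$ and to each $A_{\Omega_k(\alpha)}$ gives $N(\lambda,\alpha)=n_+(s,X(\lambda))$ and $N_k(\lambda,\alpha)=n_+(s,X_{\Omega_k(\alpha)}(\lambda))$ with $s=\alpha^{-1}$. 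First I would introduce the decoupled operator $\wh A=A_{\Omega_1(\alpha)}\os A_{\Omega_2(\alpha)}\os A_{\Omega_3(\alpha)}$, obtained from $A$ by imposing Dirichlet conditions across the two interfaces $\{|x|=\ve_1\alpha^{1/p}\}$ and $\{|x|=\ve_2\alpha^{1/p}\}$. Then $A=\wh A+B$, where $B$ collects precisely the nearest--neighbour hopping terms $-1$ joining lattice sites on opposite sides of an interface. Since each interface is a Euclidean sphere of radius of order $\alpha^{1/p}$, the number of such cut bonds, and hence $\rank B$, is $O(\alpha^{(d-1)/p})$.

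Next I would transfer this rank bound to the Birman--Schwinger level. As $\alpha V$ is a multiplication operator it is untouched by the decoupling, so the resolvent identity gives that $(A-\lambda)^{-1}-(\wh A-\lambda)^{-1}$ has rank at most $2\rank B=O(\alpha^{(d-1)/p})$; sandwiching with $\sqrt V$ shows $X(\lambda)-\bigl(X_{\Omega_1}(\lambda)\os X_{\Omega_2}(\lambda)\os X_{\Omega_3}(\lambda)\bigr)$ has the same rank bound. The Weyl inequality $|n_+(s,T_1)-n_+(s,T_2)|\le\rank(T_1-T_2)$ for compact self-adjoint $T_i$, together with $n_+\bigl(s,\Oplus_k X_{\Omega_k}(\lambda)\bigr)=\sum_k n_+(s,X_{\Omega_k}(\lambda))$, then yields
\[
N(\lambda,\alpha)=N_1(\lambda,\alpha)+N_2(\lambda,\alpha)+n_+\bigl(s,X_{\Omega_3(\alpha)}(\lambda)\bigr)+O(\alpha^{(d-1)/p}).
\]
It remains to show that the far contribution $n_+(s,X_{\Omega_3(\alpha)}(\lambda))$ is itself $O(\alpha^{(d-1)/p})$, which is exactly where the choice of $\ve_2$ enters.

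For the far term I would compare $X_{\Omega_3}(\lambda)$ with $\wt X=\sqrt{V\1_{\Omega_3}}\,(A-\lambda)^{-1}\sqrt{V\1_{\Omega_3}}$, built from the \emph{full} resolvent. Splitting $A=\bigl(A_{\Omega_3(\alpha)}\os A_{\Omega_1\cup\Omega_2}\bigr)+B'$ with $\rank B'=O(\alpha^{(d-1)/p})$ and applying the resolvent identity once more (the ball block is annihilated by $\sqrt{V\1_{\Omega_3}}$) gives $\rank(X_{\Omega_3}-\wt X)=O(\alpha^{(d-1)/p})$. On the other hand $\lambda$ lies in the gap of $A$, so $\|(A-\lambda)^{-1}\|=\bigl(\dist(\lambda,\s(A))\bigr)^{-1}=\bigl(\min(\lambda_+-\lambda,\lambda-\lambda_-)\bigr)^{-1}$, whence $\|\wt X\|\le\|V\1_{\Omega_3}\|_\infty/\dist(\lambda,\s(A))$. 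On $\Omega_3$ one has $\alpha V(n)\le\|\P\|_\infty\ve_2^{-p}(1+o(1))$, and the hypothesis $\ve_2^{p}>\|\P\|_\infty/|\lambda_+-\lambda|+\|\P\|_\infty/|\lambda_--\lambda|$ forces $\|\P\|_\infty\ve_2^{-p}<\min(\lambda_+-\lambda,\lambda-\lambda_-)$. Hence $\|\wt X\|<\alpha^{-1}=s$ for all large $\alpha$, so $n_+(s,\wt X)=0$, and the Weyl inequality gives $n_+(s,X_{\Omega_3}(\lambda))\le\rank(X_{\Omega_3}-\wt X)=O(\alpha^{(d-1)/p})$. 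Substituting into the displayed identity proves the proposition.

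The hard part will be the quantitative rank count $\rank B,\ \rank B'=O(\alpha^{(d-1)/p})$: one must verify that the number of nearest--neighbour bonds of $\Z^d$ straddling a Euclidean sphere of radius $R=\ve\,\alpha^{1/p}$ is of order $R^{d-1}$. This is a lattice--point estimate (the relevant bonds meet a tubular neighbourhood of the sphere of fixed width, of volume $O(R^{d-1})$), but it must be made uniform in the two radii, and it is precisely what pins the error at order $\alpha^{(d-1)/p}$. A secondary point to check is the validity of the identities $N_k=n_+(s,X_{\Omega_k(\alpha)})$ for those exceptional $\lambda$ that happen to be eigenvalues of some truncated $A_{\Omega_k(\alpha)}$; this is handled by the principle of Section~2 applied on each region, perturbing $\lambda$ by an infinitesimal if necessary.
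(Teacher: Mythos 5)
Your proposal is correct and shares the paper's overall architecture --- three-region decoupling, the $O(\a^{(d-1)/p})$ count of severed nearest-neighbour bonds (which, incidentally, is the \emph{easy} part, not the hard one: the cut bonds lie in a shell of fixed width $2\sqrt d$ around a sphere of radius $O(\a^{1/p})$, uniformly in the radius), transfer of the rank bound through the resolvent identity to the Birman--Schwinger operators, the Weyl inequality $|n_+(s,T_1)-n_+(s,T_2)|\le \rank(T_1-T_2)$, and the choice of $\ve_2$ to kill the far region --- but it diverges from the paper at the one genuinely delicate point. The decoupled operator $B_0=A_{\Omega_1(\a)}\os A_{\Omega_2(\a)}\os A_{\Omega_3(\a)}$ generically acquires surface-state eigenvalues inside the gap, up to $O(\a^{(d-1)/p})$ of them and possibly at or arbitrarily close to $\l$, so your raw truncated resolvents $(A_{\Omega_k(\a)}-\l)^{-1}$ may fail to exist or have norm growing with $\a$. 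The paper removes this obstruction structurally: it passes to $B=B_0-(\l_+-\l_-)E_{B_0}(\l_-,\l_+)$, a further perturbation whose rank is again $O(\a^{(d-1)/p})$ because perturbation theory bounds $\rank E_{B_0}(\l_-,\l_+)$ by $\rank(B_0-A)$; then $B$ has \emph{no} spectrum anywhere in the gap, every block resolvent obeys the $\a$-uniform bound $\|(B_3-\l I)^{-1}\|\le 1/|\l-\l_+|+1/|\l-\l_-|$, and the far contribution vanishes exactly, $n_+(\a^{-1},\tilde X_3(\l))=0$, rather than being merely $O(\a^{(d-1)/p})$. You compensate for keeping the raw truncations in two ways: the rank-comparison of $X_{\Omega_3}$ with $\wt X$ built from the full resolvent (this is sound, precisely because $\|(A-\l I)^{-1}\|$ is $\a$-independent), and the ``infinitesimal perturbation of $\l$'' for exceptional $\l\in\s(A_{\Omega_k(\a)})$. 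Be aware that the latter is not a secondary point: since the truncated operators depend on $\a$, the shift $\l\mapsto\l'(\a)$ must be chosen for each $\a$ so that $[\l',\l)$ avoids the (discrete-in-the-gap) spectra of all the operators $A_{\Omega_k(\a)}$, $A_{\Omega_k(\a)}-\a V$, $A-\a V$, and one must recheck that none of the counting functions $N$, $N_1$, $N_2$ changes and that the strict inequality defining $\ve_2$ retains enough slack at $\l'$; this does go through, but it is exactly the bookkeeping that the paper's projection trick is designed to eliminate in one stroke. Net comparison: your route is self-contained at the level of the three Dirichlet truncations and costs two extra rank-$O(\a^{(d-1)/p})$ errors plus the per-$\a$ genericity argument; the paper's route costs one finite-rank spectral shift and buys well-defined resolvents, a clean uniform norm bound, and an exactly zero third term.
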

\begin{proof}
    First, observe that since $\mathbb Z^d$ is the disjoint union of the sets $\Omega_k(\alpha)\cap\mathbb Z^d, k=1,2,3$, we have
    \begin{eqnarray*}
        l^2(\mathbb Z^d)=l^2(\Omega_1(\alpha)\cap\mathbb Z^d)\oplus l^2(\Omega_2(\alpha)\cap\mathbb Z^d)\oplus l^2(\Omega_3(\alpha)\cap\mathbb Z^d).
    \end{eqnarray*} This decomposition allows one to consider the operator
    \begin{eqnarray*}
        B_0=A_{\Omega_1(\alpha)}\oplus A_{\Omega_2(\alpha)}\oplus A_{\Omega_3(\alpha)}.
    \end{eqnarray*} This operator might have eigenvalues inside $(\lambda_-,\lambda_+)$. However, the operator $$B=B_0-(\lambda_+-\lambda_-)E_{B_0}(\lambda_-,\lambda_+)$$ does not have this property: the spectrum of $B$ in $(\lambda_-,\lambda_+)$ is empty. We set now $B(\alpha)=B-\alpha V$, for $\alpha>0$. Observe that $A-B_0$ is an operator of finite rank, and the rank of this operator does not exceed the double number of the ``links" that intersect one of the spheres $\{x\in\mathbb R^d:|x|=\varepsilon_k\alpha^{1/p}\}, k=1,2$. By a ``link", we mean any interval connecting two points $n,m\in\mathbb Z^d$ such that $|n-m|=1$. The number of these links is a quantity of order $O(\alpha^{(d-1)/p})$ as $\alpha\to\infty$. Denote $r(\alpha)=\text{rank}(B_0-A)$. Then $r(\alpha)=O(\alpha^{(d-1)/p})$ as $\alpha\to\infty$. 

    According to the perturbation theory, 
   \begin{eqnarray*}
       \text{rank}(E_{B_0}(\lambda_-,\lambda_+))\le r(\alpha).  \end{eqnarray*}  
    Therefore, $\text{rank}(B-B_0)\le r(\alpha)$. Thus, $\text{rank}(B-A)\le\text{rank}(B-B_0)+\text{rank}(B_0-A)\le2 r(\alpha)$. 
    
    Define $\tilde X(\lambda)=\sqrt{V}(B-\lambda I)^{-1}\sqrt{V}$ for $\lambda\in\Lambda$. Then 
    \begin{eqnarray*}
        X(\lambda)-\tilde{X}(\lambda)=\sqrt{V}(A-\lambda I)^{-1}(B-A)(B-\lambda I)^{-1}\sqrt{V},
    \end{eqnarray*} which implies that
    \begin{eqnarray*}
        \text{rank}(X(\lambda)-\tilde X(\lambda))\le\text{rank}(B-A)\le2r(\alpha).
    \end{eqnarray*} Consequently, for any $\lambda\in\Lambda$,
    \begin{eqnarray}
    \label{r(alpha)}
        |n_+(\alpha^{-1},X(\lambda))-n_+(\alpha^{-1},\tilde X(\lambda))|\le 2r(\alpha).
    \end{eqnarray} 
    
    The operator $\tilde X(\lambda)$ is the Birman-Schwinger operator with $A$ replaced by $B$. Therefore, according to the Birman-Schwinger principle, $n_+(\alpha^{-1},\tilde X(\lambda))$ is the number of eigenvalues of $B-tV$ that pass through $\lambda$ as $t$ increases from 0 to $\alpha$. Note now that the operator $B$ can be decomposed to the orthogonal sum $B=B_1\oplus B_2\oplus
    B_3$, where 
    \begin{eqnarray*}
        B_k=A_{\Omega_k(\alpha)}-(\lambda_+-\lambda_-)E_{A_{\Omega_k(\alpha)}}(\Lambda), \qquad k=1,2,3.    \end{eqnarray*} Hence,
        \begin{eqnarray*}
            (B-\lambda I)^{-1}=(B_1-\lambda I)^{-1}\oplus(B_2-\lambda I)^{-1}\oplus(B_3-\lambda I)^{-1},            
        \end{eqnarray*} which leads to the decomposition of the Birman-Schwinger operator
        \begin{eqnarray*}
            \tilde{X}(\lambda)=\tilde{X}_1(\lambda)\oplus\tilde{X}_2(\lambda)\oplus\tilde{X}_3(\lambda),\qquad \text{where }   \tilde{X}_k(\lambda)=\sqrt{V}(B_k-\lambda I)\sqrt{V}.
        \end{eqnarray*} So we finally conclude that
        \begin{eqnarray*}
            n_+(\alpha^{-1},\tilde{X}(\lambda))=n_+(\alpha^{-1},\tilde{X}_1(\lambda))+n_+(\alpha^{-1},\tilde{X}_2(\lambda))+n_+(\alpha^{-1},\tilde{X}_3(\lambda)).
        \end{eqnarray*}
        It remains to note that
        \begin{eqnarray*}
            n_+(\alpha^{-1},\tilde{X}_1(\lambda))=N_1(\lambda,\alpha),\qquad n_+(\alpha^{-1},\tilde{X}_2(\lambda))=N_2(\lambda,\alpha)            
        \end{eqnarray*} and show that
        \[
        \label{X_3}
        n_+(\alpha^{-1},\tilde{X}_3(\lambda))=0.        \] Put differently, 
\begin{eqnarray}
\label{tilde X}
    n_+(\alpha^{-1},\tilde X(\lambda))=N_1(\lambda,\alpha)+N_2(\lambda,\alpha). 
\end{eqnarray} Combining \eqref{Birman-Schwinger Principle},\eqref{r(alpha)} and \eqref{tilde X}, we get
    \begin{eqnarray*}
        N(\lambda,\alpha)-(N_1(\lambda,\alpha)+N_2(\lambda,\alpha))\le r(\alpha).
        \end{eqnarray*} 

Let us prove \eqref{X_3}. First we choose $\delta>0$ so that 
\begin{eqnarray}
\label{epsilon, delta}
    \varepsilon_2>\left(\|\Psi\|_\infty\left(\frac{1}{|\lambda-\lambda_+|}+\frac{1}{|\lambda-\lambda_-|}\right)(1+\delta)\right)^{1/p}.
\end{eqnarray}
After that, we choose $\a_0>0$ so large that
\begin{eqnarray}
\label{V}
|V(n)|\le\frac{\|\Psi\|_\infty(1+\delta)}{|n|^{p}}, \qquad\forall n\in\Omega_3(\a),\qquad \a>\a_0.
\end{eqnarray} Then it follows from \eqref{V} that
\begin{eqnarray}
\label{X_3(2)}
\|\tilde{X}_3(\lambda)\|\le\sup_{n\in\Omega_3(\a)}|V(n)|\cdot\|(B_3-\lambda I)^{-1}\|\le\frac{\|\Psi\|_\infty(1+\delta)}{\varepsilon_2^p\a}\left(\frac{1}{|\lambda-\lambda_+|}+\frac{1}{|\lambda-\lambda_-|}\right).
\end{eqnarray} Thus, by \eqref{epsilon, delta} and \eqref{X_3(2)}, $\|\tilde{X}_3(\a)\|<\a^{-1}$, which implies \eqref{X_3}.
\end{proof}

We see from this proposition that to obtain the asymptotic formula for  $N(\lambda,\alpha)$, it is enough to get asymptotic formulas for $N_1(\lambda,\alpha)$ and $N_2(\lambda,\alpha)$. Later we will prove that
\begin{equation}
    \label{1}\limsup_{\alpha\to\infty} \alpha^{-d/p}N_1(\lambda,\alpha)\leq 2\omega_d\varepsilon_1^d,
\end{equation}
where $\omega_d$ is the  volume of the unit ball in ${\mathbb R}^d$. We will also show that
\begin{eqnarray}
\label{N_2}
    \lim_{\alpha\to\infty} \alpha^{-d/p}N_2(\lambda,\alpha)= \int_{\varepsilon_1<|x|<\varepsilon_2}\left(\rho(\lambda+\Psi(\theta)|x|^{-p})-\rho(\l)\right)dx,
\end{eqnarray}
for $\varepsilon_2>(\|\Psi\|_\infty/|\lambda_+-\lambda|+\|\Psi\|_\infty/|\lambda_--\lambda|)^{1/p}$.

\section{Estimate of $N_1(\lambda,\alpha)$}

In this section, we estimate the function $N_1(\lambda,\alpha)$. The dimension of the space $l^2(\Omega_1(\alpha)\cap\mathbb Z^d)$ coincides with the number of the points in $\Omega_1(\alpha)\cap\mathbb Z^d$. On the other hand, the number of these points cannot be larger than the volume of the ball $\{x\in\mathbb R^d:|x|<\varepsilon_1\alpha^{1/p}\}$. Thus, $\dim\big( l^2(\Omega_1(\alpha)\cap\mathbb Z^d)\big)\le\text{vol}\{x\in\mathbb R^d:|x|<\varepsilon_1\alpha^{1/p}+\sqrt{d}\}=\omega_d(\varepsilon_1\alpha^{1/p}+\sqrt{d})^d$, where $\omega_d$ is the volume of the unit ball in $\mathbb R^d$. Since $N_1(\lambda,\alpha)\le2\dim\big( l^2(\Omega_1(\alpha))\cap\mathbb Z^d\big)$, we obtain \eqref{1}.

\section{Asymptotics of $N_2(\lambda,\alpha)$}

Let $\tilde\Omega=\{x\in\mathbb R^3: \varepsilon_1<|x|<\varepsilon_2\}$. In order to obtain an asymptotic formula for $N_2(\lambda,\alpha)=N(A_{\Omega_2(\alpha)}-\alpha V, \lambda)-N(A_{\Omega_2(\a)},\l)$, we divide $\tilde\Omega$ into a finite number of disjoint sets $\{Q_i\}$ given by $Q_i=(\delta\mathbb Q_i)\cap\tilde\Omega$, where $\mathbb Q_i=[0,1)^d+i, i\in\mathbb Z^d$ and $\delta>0$. Some of $Q_i$'s are cubes and some of them are not. Observe that since $\Omega_2(\alpha)\cap\mathbb Z^d$ is the disjoint union of sets $\alpha^{1/p}Q_i\cap\mathbb Z^d$, we have
\begin{eqnarray*}
    l^2(\Omega_2(\alpha)\cap\mathbb Z^d)=\oplus\,\text{-}\sum_i l^2(\alpha^{1/p}Q_i\cap\mathbb Z^d).
\end{eqnarray*} This orthogonal decomposition allows one to consider the operator
\begin{eqnarray*}
    D(\alpha)=\oplus\,\text{-}\sum_i(A_{\alpha^{1/p}Q_i}-\alpha V).
\end{eqnarray*}
Then $A_{\Omega_2(\alpha)}-D(\alpha)$ is an operator of finite rank, and the rank of this operator does not exceed the double number of the ``links" that intersect the boundaries of the sets $\alpha^{1/p}Q_i$. The number of these links is a quantity of order $O(\alpha^{(d-1)/p})$ as $\alpha\to\infty$. Therefore, 
\begin{eqnarray*}
    |N_2(\lambda,\alpha)-N(D(\alpha),\lambda)|\le\text{rank}(A_{\Omega_2(\alpha)}-D(\alpha))=O(\alpha^{(d-1)/p}),\qquad\text{ as}\quad\alpha\to\infty.
\end{eqnarray*} Since $N(D(\alpha),\lambda)=\sum_iN(A_{\alpha^{1/p}Q_i}-\alpha V, \lambda)$, we obtain
\begin{eqnarray*}
    N(A_{\Omega_2(\alpha)}-\alpha V, \lambda)=\sum_iN(A_{\alpha^{1/p}Q_i}-\alpha V, \lambda)+O(\alpha^{(d-1)/p}), \qquad\text{ as}\quad\alpha\to\infty.
\end{eqnarray*} Suppose first that the equality $V(x)=\Psi(\theta)/|x|^p$ holds (not only asymptotically, but) exactly for $|x|>1$. For each $Q_i$, define $x_i^{\max}$ and $x_i^{\min}$ to be the points for which
\begin{eqnarray*}
    \max_{x\in Q_i}V(x)=V(x_i^{\max}),\qquad \min_{x\in Q_i}V(x)=V(x_i^{\min}).
\end{eqnarray*} Then for any $n\in\alpha^{1/p}Q_i$, we have 
\begin{eqnarray}\label{doubleestimate}
    V(x_i^{\min})\le\alpha V(n)\le V(x_i^{\max}),
\end{eqnarray} where both the lower and upper bounds of $\alpha V(n)$ are independent of $\alpha$. 

By the Minimax principle, 
\begin{eqnarray*}
    N(A-\alpha V, \lambda)=\max_F\dim(F),
\end{eqnarray*} where the maximum is taken over all subspaces $F\subseteq l^2$ on which $\langle(A-\alpha V)u,u\rangle\le\lambda\|u\|^2$ for all $u\in F$. This implies that if $V_1$ and $V_2$ are two potentials defined on $\mathbb Z^d$ such that $V_1\le V_2$, then $N(A-\alpha V_1,\lambda)\le N(A-\alpha V_2,\lambda)$. Similarly, we have $N(A_{\alpha^{1/p}Q_i}-\alpha V_1,\lambda)\le N(A_{\alpha^{1/p}Q_i}-\alpha V_2,\lambda)$ if $V_1\le V_2$. 

It follows from \eqref{doubleestimate} that
\begin{eqnarray*}
    N(A_{\alpha^{1/p}Q_i}- V(x_i^{\min}),\lambda)\le N(A_{\alpha^{1/p}Q_i}-\alpha V,\lambda)\le N(A_{\alpha^{1/p}Q_i}- V(x_i^{\max}),\lambda).
\end{eqnarray*} Observe that
\begin{eqnarray*}
    N(A_{\alpha^{1/p}Q_i}-V(x_i^{\max}),\lambda)&=&N(A_{\alpha^{1/p}Q_i},\lambda+V(x_i^{\max}))\qquad\text{ and}\\ N(A_{\alpha^{1/p}Q_i}-V(x_i^{\min}),\lambda)&=&N(A_{\alpha^{1/p}Q_i},\lambda+V(x_i^{\min})),
\end{eqnarray*} thus, we have, as $\alpha\to\infty$,
\begin{eqnarray*}
    N(A_{\alpha^{1/p}Q_i},\lambda+V(x_i^{\max}))&\sim&\text{vol}(\alpha^{1/p}Q_i)\rho(\lambda+V(x_i^{\max}))=\alpha^{d/p}\text{vol}(Q_i)\rho(\lambda+V(x_i^{\max})),\\ N(A_{\alpha^{1/p}Q_i},\lambda+V(x_i^{\min}))&\sim&\text{vol}(\alpha^{1/p}Q_i)\rho(\lambda+V(x_i^{\min}))=\alpha^{d/p}\text{vol}(Q_i)\rho(\lambda+V(x_i^{\min})).
\end{eqnarray*} Consequently, the upper and lower bounds of
\begin{eqnarray*}    \lim_{\alpha\to\infty}\frac{N(A_{\Omega_2(\a)}-\a V,\l)}{\alpha^{d/p}}
\end{eqnarray*} are the Riemann sums
\begin{eqnarray*}
    &&\sum_{i=1}^k\rho(\lambda+V(x_i^{\max}))\text{vol}(Q_i)\text{ and} \\&&\sum_{i=1}^k\rho(\lambda+V(x_i^{\min}))\text{vol}(Q_i),
\end{eqnarray*}respectively. Therefore, when $\text{vol}(Q_i)\to 0$ for each $i$,
\begin{eqnarray}
\label{N=int1}
\lim_{\alpha\to\infty}\frac{N(A_{\Omega_2(\a)}-\a V,\l)}{\alpha^{d/p}}=\int_{\varepsilon_1<|x|<\varepsilon_2}\rho(\lambda+\Psi(\theta)|x|^{-p})dx.
\end{eqnarray} On the other hand, according to the definition of the density of states, 
\begin{eqnarray}
\label{N=int2}
\lim_{\alpha\to\infty}\frac{N(A_{\Omega_2(\a)},\l)}{\alpha^{d/p}}=\rho(\l)\text{vol}(\tilde{\Omega})=\int_{\varepsilon_1<|x|<\varepsilon_2}\rho(\l)dx.
\end{eqnarray} Subtracting \eqref{N=int2} from \eqref{N=int1}, we obtain \eqref{N_2}.

\section{End of the proof of Theorem~\ref{main theorem}}

It follows from \eqref{1} and \eqref{N_2} that 
\begin{eqnarray*}
    \limsup_{\alpha\to\infty}\frac{N(\lambda,\alpha)}{\alpha^{d/p}}&=&\limsup_{\alpha\to\infty}\frac{N_1(\lambda,\alpha)+N_2(\lambda,\alpha)}{\alpha^{d/p}}\\
    &\le&\omega_d\varepsilon_1^d+\int_{\varepsilon_1<|x|<\varepsilon_2}\left(\rho(\lambda+\Psi(\theta)|x|^{-p})-\rho(\l)\right)dx.
\end{eqnarray*} Taking the limits as $\varepsilon_1\to0$ and $\varepsilon_2\to\infty$, we obtain 
\begin{eqnarray}
\label{61}
\limsup_{\alpha\to\infty}\frac{N(\lambda,\alpha)}{\alpha^{d/p}}\le\int_{\mathbb R^d}\left(\rho(\lambda+\Psi(\theta)|x|^{-p})-\rho(\l)\right)dx.
\end{eqnarray} On the other hand, it also follows from \eqref{1} and \eqref{N_2} that
\begin{eqnarray*}
    \liminf_{\alpha\to\infty}\frac{N(\lambda,\alpha)}{\alpha^{d/p}}&=&\liminf_{\alpha\to\infty}\frac{N_1(\lambda,\alpha)+N_2(\lambda,\alpha)}{\alpha^{d/p}}\\
    &\ge&\int_{\varepsilon_1<|x|<\varepsilon_2}\left(\rho(\lambda+\Psi(\theta)|x|^{-p})-\rho(\l)\right)dx.
\end{eqnarray*} Taking the limits as $\varepsilon_1\to0$ and $\varepsilon_2\to\infty$, we obtain 
\begin{eqnarray}
\label{62}
\liminf_{\alpha\to\infty}\frac{N(\lambda,\alpha)}{\alpha^{d/p}}\ge\int_{\mathbb R^d}\left(\rho(\lambda+\Psi(\theta)|x|^{-p})-\rho(\l)\right)dx.
\end{eqnarray} Combining \eqref{61} and \eqref{62}, we get \eqref{Vasympt}.

\end{document}